\newcommand{\ceil}[1]{\left\lceil#1\right\rceil}
\newcommand{\floor}[1]{\left\lfloor#1\right\rfloor}
\newtheorem{theorem}{Theorem}[section]
\newtheorem{lemma}[theorem]{Lemma}
\newtheorem{question}[theorem]{Question}
\title{A note on list-coloring powers of graphs}
\author[N. Kosar, S. Petrickova, B. Reiniger, E. Yeager]{Nicholas Kosar\and Sarka Petrickova\and Benjamin Reiniger\and Elyse Yeager}
\thanks{kosar2@illinois.edu, petrckv2@illinois.edu, reinige1@illinois.edu, yeager2@illinois.edu\\Mathematics Dept., University of Illinois, Urbana-Champaign\\The authors acknowledge support from National Science Foundation grant DMS 08-­‐38434 ``EMSW21-­‐MCTP: Research Experience for Graduate Students''.}
\begin{document}
\begin{abstract}
Recently, Kim and Park have found an infinite family of graphs whose squares are not chromatic-choosable.  Xuding Zhu asked whether there is some $k$ such that all $k$th power graphs are chromatic-choosable.  We answer this question in the negative: we show that there is a positive constant $c$ such that for any $k$ there is a family of graphs $G$ with $\chi(G^k)$ unbounded and $\chi_{\ell}(G^k)\geq c \chi(G^k) \log \chi(G^k)$.  We also provide an upper bound, $\chi_{\ell}(G^k)<\chi(G^k)^3$ for $k>1$.
\end{abstract}

\maketitle

\section{Introduction}
The \emph{list-chromatic number} of a graph $G$, denoted $\chi_{\ell}(G)$, is the least $k$ such that for any assignment of lists of size $k$ to the vertices of $G$, there is a proper coloring of $V(G)$ where the color at each vertex is in that vertex's list.  A graph is said to be \emph{chromatic-choosable} if $\chi_{\ell}(G)=\chi(G)$.  The \emph{$k$th power} of a graph $G$, denoted by $G^k$, is the graph on the same vertex set as $G$ such that $uv$ is an edge if and only if the distance from $u$ to $v$ in $G$ is at most $k$.

The List Total Coloring Conjecture (LTCC) asserts that $\chi_{\ell}(T(G))=\chi(T(G))$ for every graph $G$, where $T(G)$ is the total graph of $G$.  The List Square Coloring Conjecture (LSCC) was introduced in \cite{KW}, as it would imply the LTCC.  The LSCC asserts that squares of graphs are chromatic-choosable.  However, the LSCC was recently disproved by Kim and Park \cite{KP}, who constructed a family of graphs $G$ with $\chi(G^2)$ unbounded and $\chi_{\ell}(G^2)\geq c \chi(G^2) \log \chi(G^2)$.  Xuding Zhu asked whether there is any $k$ such that all $k$th powers are chromatic-choosable \cite{Z}.  In this note we give a negative answer to Zhu's question, with a bound on $\chi_{\ell}(G^k)$ that matches that of Kim and Park for $k=2$.

\newtheorem*{mainthm}{Theorem \ref{mainthm}}
\begin{mainthm}
There is a positive constant $c$ such that for every $k\in\mathbb{N}$, there is an infinite family of graphs $G$ with $\chi(G^k)$ unbounded such that 
\[ \chi_{\ell}(G^k) \geq c \chi(G^k) \log \chi(G^k) . \]
\end{mainthm}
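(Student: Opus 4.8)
The plan is to deduce the theorem from Kim and Park's result by ``stretching'' their graphs. Let $F_n$ denote the Kim--Park graphs, put $X_n := F_n^2$, and recall that $\chi(X_n) \to \infty$ while $\chi_{\ell}(X_n) \geq c_0\,\chi(X_n)\log\chi(X_n)$ for an absolute constant $c_0$. For a fixed $k$ I will produce a graph $G = G_n$ with $X_n \subseteq G^k$ and $\chi(G^k) \leq C\,\chi(X_n)$, where $C$ is independent of $k$. Granting this, $\chi(G^k) \geq \chi(X_n) \to \infty$ and
\[
\chi_{\ell}(G^k) \;\geq\; \chi_{\ell}(X_n) \;\geq\; c_0\,\chi(X_n)\log\chi(X_n) \;\geq\; \tfrac{c_0}{2C}\,\chi(G^k)\log\chi(G^k)
\]
for all large $n$, which is the theorem with $c = c_0/(2C)$.

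For $k \in \{1,2\}$ there is nothing to do: take $G = X_n$ and $G = F_n$, respectively. For $k \geq 4$, let $G$ be obtained from $F_n$ by replacing every edge with an internally disjoint path of length $a$, where $a$ is an integer with $\floor{k/a} = 2$ (such $a$ exists for every $k \geq 4$, e.g.\ $a = \floor{k/3}+1$). Then $\dist_G(u,v) = a\cdot\dist_{F_n}(u,v)$ for original vertices $u,v$, so $u$ and $v$ are adjacent in $G^k$ exactly when $\dist_{F_n}(u,v) \leq \floor{k/a} = 2$; hence $G^k$ restricted to $V(F_n)$ is precisely $F_n^2 = X_n$, giving $X_n \subseteq G^k$. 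The two ``easy halves'' $\chi(G^k)\geq\chi(X_n)$ and $\chi_{\ell}(G^k)\geq\chi_{\ell}(X_n)$ follow immediately.

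The heart of the matter is the bound $\chi(G^k) \leq C\,\chi(X_n)$ with $C$ free of $k$, and this is the step I expect to require real work. The obstruction is the subdivision (``internal'') vertices: around a vertex $w$ of $F_n$ of large degree, the internal vertices of the $\deg_{F_n}(w)$ paths replacing the edges at $w$ lie pairwise within distance $k$ in $G$, hence form a clique in $G^k$ of size of order $k\cdot\deg_{F_n}(w)$, which a priori threatens $\chi(G^k) = \Omega(k\,\chi(X_n))$ --- not good enough. To get around this one should: (i) verify that an internal vertex on the path replacing an edge $uv$ is adjacent in $G^k$ only to the original vertices in $N_{F_n}[u]\cup N_{F_n}[v]$ (a union of two cliques of $X_n$) and to internal vertices whose defining edges are within bounded line-graph distance of $uv$; and (ii) combine this with quantitative features of the Kim--Park construction --- a bound on $\Delta(F_n)$ and on the chromatic number of the ``stretched'' part relative to $\chi(X_n)$ --- so that an optimal colouring of $X_n$ on $V(F_n)$ can be extended to the internal vertices using only $O(\chi(X_n))$ further colours, the constant not depending on $k$. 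Alternatively, one replaces the bare path by a more elaborate gadget engineered so that $\chi(G^k)$ stays within a constant factor of $\chi(X_n)$ by construction. Either way, the delicate point is making the constant genuinely uniform in $k$ rather than merely bounded for each fixed $k$.

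It remains to treat $k = 3$, where no edge length $a$ gives $\floor{k/a} = 2$. Here take $G = F_n$ and use $F_n^2 \subseteq F_n^3$, which already yields $\chi_{\ell}(F_n^3) \geq \chi_{\ell}(X_n) \geq c_0\,\chi(X_n)\log\chi(X_n)$; so one only needs $\chi(F_n^3) \leq C\,\chi(X_n)$. When $F_n$ is bipartite this is immediate: distances between vertices on the same side of $F_n$ are even, so $F_n^3$ induces $X_n$ on each side, and colouring the two sides with disjoint optimal palettes for these induced subgraphs properly colours $F_n^3$, giving $\chi(F_n^3) \leq 2\chi(X_n)$. (For a non-bipartite $F_n$ one argues the same way using a bounded proper colouring of $F_n$ together with a comparison of $\chi(F_n^2)$ and $\chi(F_n^3)$.) Combining the cases gives the theorem.
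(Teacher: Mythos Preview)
Your proposal has a genuine gap at precisely the point you flag: the bound $\chi(G^k)\le C\,\chi(X_n)$ with $C$ independent of $k$ is asserted but not established. The outline in (i)--(ii) is not a proof; ``bounded line-graph distance'' and ``extend an optimal $X_n$-colouring to the internal vertices with $O(\chi(X_n))$ further colours'' both require real arguments that you do not supply, and you yourself list an unspecified ``more elaborate gadget'' as an alternative. Since the entire force of the theorem rests on this inequality (the other direction $\chi_\ell(G^k)\ge\chi_\ell(X_n)$ is trivial), the proposal as written does not prove the result. Your treatment of $k=3$ via bipartiteness of $F_n$ is fine, but that case was never the issue.

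The paper avoids this difficulty altogether by \emph{not} black-boxing Kim--Park. It rebuilds the construction from the affine plane: edges of the point--line incidence graph $H$ are subdivided into paths of two carefully chosen lengths ($k$ for edges to one distinguished parallel class $L_1$, and $k+1$ for the rest), and the line-vertices outside $L_1$ are replaced by cliques on their neighbourhoods. For this specific graph the paper proves two exact structural lemmas: the $4k$-th power is multipartite with respect to an explicit partition $\mathcal{V}$ into $kn^2+1$ parts of size $n$, and the subgraph induced on levels $0$ through $k-1$ is \emph{complete} multipartite. The first lemma gives $\chi(G^{4k})\le kn^2+1$ directly --- no comparison with $\chi(X_n)$ is needed --- and the second feeds into Alon's bound to give $\chi_\ell(G^{4k})\ge d_1(k-1)n^2\log n$. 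The family for the theorem is then $\{G^4\}_n$, since $(G^4)^k=G^{4k}$. The point is that the non-uniform subdivision lengths and the clique-replacement are engineered so that the multipartite structure (hence the chromatic upper bound) is exact and provable; a uniform $a$-subdivision of $F_n$, as in your plan, does not obviously yield any such clean partition, which is why your step (ii) stalls.
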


While preparing this note, it has come to our attention that Kim, Kwon, and Park have arrived at a similar result \cite{KKP}.  They have found, for each $k$, an infinite family of graphs $G$ whose $k$th powers satisfy $\chi_{\ell}(G^k) \geq \frac{10}{9}\chi(G^k)-1$.

Let $f_k(m)=\max\{\chi_{\ell}(G^k): \chi(G^k)=m\}$.  Then Theorem \ref{mainthm} says that $f_k(m)\geq c m \log m$.  Kwon (see \cite{N}) observed that $f_2(m)<m^2$.  We extend this observation to larger $k$ in section \ref{sec:upperbd}.
\newtheorem*{thm:upperbd}{Theorem \ref{thm:upperbd}}
\begin{thm:upperbd}
Let $k>1$.  If $k$ is even, then $f_k(m)<m^2$.  If $k$ is odd, then $f_k(m)<m^3$.
\end{thm:upperbd}
\begin{question}
What is the correct order of magnitude of $f_k(m)$?  Does it depend on $k$?
\end{question}

\section{Construction}\label{sec:construction}

The example of Kim and Park \cite{KP} for $k=2$ is based on complete sets of mutually orthogonal latin squares.  We will use this structure to find examples for all $k$, but we find the language of affine planes to be more convenient.

Take an affine plane $(\mathcal{P},\mathcal{L})$ on $n^2$ points.  Let $\{L_0 , L_1 , \dotsc , L_n\}$ be the decomposition of $\mathcal{L}$ into parallel classes.  Recall that we call the elements of $\mathcal{P}$ the \emph{points} and the elements of $\mathcal{L}$ the \emph{lines} of the plane, and that we have the following properties (see for instance \cite{CRC}):
\begin{itemize}
\item Each line is a set of $n$ points.
\item For each pair of points, there is a unique line containing them.
\item Two lines in the same parallel class do not intersect.
\item Two lines in different parallel classes intersect in exactly one point.
\item Such a plane exists whenever $n$ is a (positive) power of a prime.
\end{itemize}

Form the bipartite graph $H$ with parts $\mathcal{P}$ and $B=\mathcal{L} - L_0$, with $p\ell\in E(H)$ if and only if $p\in\ell$.  Let $a_1,\dotsc,a_n$ denote the lines of $L_0$.  Consider the refinement $\mathcal{V'}$ of the bipartition of $H$ obtained by partitioning $\mathcal{P}$ into $a_1, \dotsc, a_n$ and $B$ into $L_1, \dotsc, L_n$.  Note that $H[a_i, L_j]$ is a matching for each $i$ and $j$.  In Figure \ref{figH}, the graph $H$ is shown with $n=3$.  Edges are drawn differently according to which parallel class their line-endpoint belongs to, and the parts of $\mathcal{V}'$ are indicated.

\begin{figure}
\centering
\includegraphics{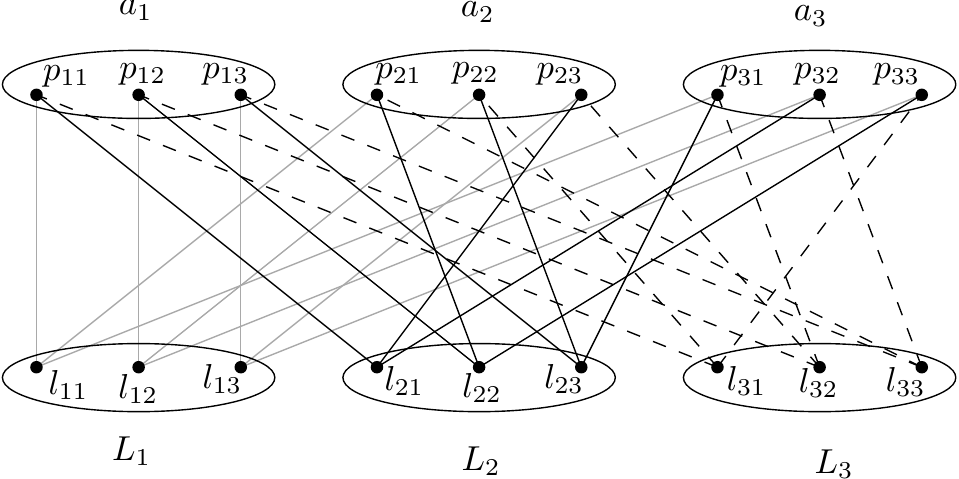}
\label{figH}
\caption{The graph $H$, here with $n=3$.}
\end{figure}

Let $k\geq2$.  Subdivide the edges of $H$ into paths of different lengths: edges incident to $L_1$ are subdivided into paths of length $k$, while edges not incident to $L_1$ are subdivided into paths of length $k+1$.  For an edge $p\ell\in E(H)$, denote the vertices along the subdivision path as $p=(p\ell)_0, (p\ell)_1, (p\ell)_2, \dotsc$.  If $\ell\in L_1$, then $(p\ell)_k=\ell$, and if $\ell\notin L_1$, then $(p\ell)_{k+1}=\ell$.  For a vertex $(p\ell)_i$, say its \emph{level} is $i$, its \emph{point} is $p$, and its \emph{line} is $\ell$ (levels are well-defined, and points and lines of vertices of degree 2 are well-defined).  Form the graph $G$ by, for each $\ell\in\bigcup_{2\leq i\leq n} L_i$, adding edges to make the neighborhood of $\ell$ a clique and then deleting $\ell$.  For each $i,j\in[n]$ and $m\in\{0,\dotsc,k\}$, let $V_{i,j,m} = \{ (p\ell)_m: p\ell\in E(H), p\in a_i, \ell\in L_j\}$; then $\{V_{i,j,m}: i,j\in[n], m\in\{0,\dotsc,k\}\}$ is a partition of $V(G)$ into sets of size $n$, which we call $\mathcal{V}$.  In Figure $\ref{figG}$, the graph $G$ is shown.  Again we use $n=3$, and here the parts of $\mathcal{V}$ are indicated.

\begin{figure}
\centering
\includegraphics{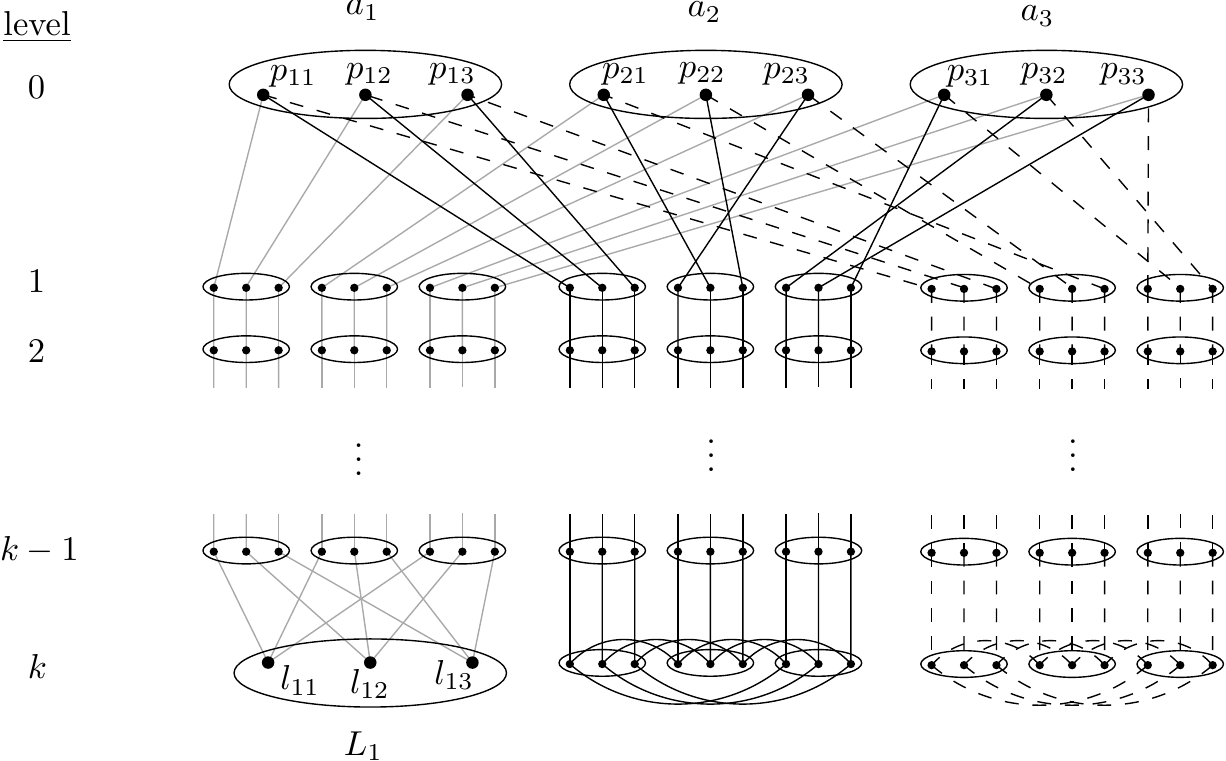}
\label{figG}
\caption{The graph $G$ when $n=3$.}
\end{figure}

\section{Proof of Theorem \ref{mainthm}}

\begin{lemma}
$G^{4k}$ is multipartite with partition $\mathcal{V}$.
\end{lemma}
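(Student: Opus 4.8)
The plan is to recast the statement as a lower bound on distances in $G$. Since $\mathcal V$ is already known to partition $V(G)$, to show ``$G^{4k}$ is multipartite with partition $\mathcal V$'' it suffices to show that each part of $\mathcal V$ is independent in $G^{4k}$, that is, that any two distinct vertices $u,v$ in a common part $V_{i,j,m}$ satisfy $\dist_G(u,v)>4k$.

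The main device I would use is a decomposition of walks in $G$ along the lines of $B$. For $\ell\in B$ call the \emph{region} of $\ell$ the union of the subdivided edges of $H$ incident to $\ell$, together with the clique $K_\ell$ that replaced $\ell$ if $\ell\notin L_1$ (or with $\ell$ itself if $\ell\in L_1$). Then deleting the points $\mathcal P$ disconnects $G$ precisely into these regions, so the region of $\ell$ meets the rest of $G$ only at the points of $\ell$. Consequently every $u$--$v$ walk breaks into an initial leg inside the region of $u$'s line, a succession of region-traversals separated by visits to points $q_1,\dotsc,q_s$ (with $q_r,q_{r+1}$ both lying on the line of the $r$-th traversed region), and a final leg inside the region of $v$'s line. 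Two routine estimates drive the count: a traversal of the region of $\ell$ between two distinct points of $\ell$ has length $\ge 2k$, with equality only possible when $\ell\in L_1$ (the clique costs an extra $+1$ otherwise); and the initial leg from a vertex at level $m$ has length $\ge m$, with equality only if its endpoint $q_1$ is the point of $u$, and length $\ge 2k-m$ otherwise, and symmetrically for the final leg.

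Now I would run a short case analysis on $m$ and on whether $j=1$. In each case the two vertices of a common part have distinct ``points'' $p\ne p'$ on a common line of $L_0$ (for the parts $V_{i,j,0}=a_i$ and $V_{i,1,k}=L_1$ this reads directly as two points of $a_i$, respectively two parallel lines of $L_1$, with the lines playing the role of path-ends). Since that common line lies in $L_0$ and so is not in $B$, no single region-traversal joins $p$ to $p'$; hence every $u$--$v$ walk uses at least two traversals, and two consecutive traversals cannot both use lines of $L_1$ because those would be two distinct parallel lines sharing the intervening point. Summing the per-segment estimates then yields $\dist_G(u,v)\ge 4k+1$ in every case, with the cases $s\ge 3$ immediate.

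The part I expect to require real care is the $s=2$ bookkeeping: the $\pm1$ shortcuts created by replacing $\ell$ with $K_\ell$ for $\ell\in L_{\ge 2}$ have to be chased through the few sub-cases---does the initial or final leg land on the vertex's own point, is the traversed line in $L_1$---and in each one the equality chain that would be needed for a walk of length $\le 4k$ is shown to force two distinct parallel lines through a common point, a contradiction. This is routine but it is exactly where the constant is pinned down; I would also take a line to note that $\mathcal V$ really is a partition, since the parts with $m=0$, and those with $m=k$ and $j=1$, coincide for different index choices.
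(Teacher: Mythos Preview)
Your outline is correct and amounts to a more uniform packaging of the paper's argument.  The paper proceeds by a four-way case split on the level of the two vertices (two points of some $a_i$; two lines of $L_1$; two level-$k$ vertices with $j\neq 1$; two intermediate-level vertices) and in each case traces the possible shapes of a shortest path by hand, repeating essentially the same parallelism argument in each case.  Your region decomposition---cut $G$ at $\mathcal P$, bound each walk segment separately, and use that consecutive segments cannot both run through $L_1$-lines---captures exactly the same estimates but applies them once instead of four times.  Both proofs rest on the same two facts: crossing a region between two of its points costs at least $2k$ steps (and exactly $2k$ only when the line is in $L_1$), and two distinct lines of the same parallel class share no point.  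Your framing makes the role of the extra $+1$ from the cliques $K_\ell$ more transparent.

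One wording issue worth fixing: the implication ``no single region-traversal joins $p$ to $p'$; hence every $u$--$v$ walk uses at least two traversals'' is only literally valid when the walk's first and last visited points are $p$ and $p'$, i.e.\ essentially when $m=0$.  For $m>0$ the correct reason that $s\ge 2$ is that $\ell,\ell'\in L_j$ are parallel (so no single point lies on both, ruling out $s=1$); the fact that $p,p'\in a_i$ with $a_i\notin B$ enters only later, to exclude the sub-case $q_1=p$, $q_2=p'$ inside your $s=2$ bookkeeping.  This is a matter of exposition rather than a gap---your final paragraph shows you have the right picture of where the care is actually needed.
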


\begin{proof}
Let $p$ and $q$ be two points in some $a_i$. Any path from $p$ to $q$ must start by increasing levels, arriving at $(p\ell)_{k}$. If $\ell \notin L_1$, then the path must move from $(p\ell)_{k}$ to $(p'\ell)_{k}$ for some $p'$ not on $a_i$. Continuing along the path to level $0$, we arrive at $p'$. Since $p'$ is not on $a_i$, $p'$ and $q$ are on a common line $\ell'\in \bigcup_{i=1}^n L_i$. If $\ell'\in L_1$, the shortest path from $p'$ to $q$ is to increase levels to $\ell'$ and decrease levels to $q$. If $\ell'\in \bigcup_{i=2}^n L_i$, the shortest path from $p'$ to $q$ is to increase levels to $(p'\ell')_{k}$, move over to $(q\ell')_{k}$, and then decrease levels to $q$. Notice, if $p$ and $p'$ are on a common line in $L_1$, $p'$ and $q$ cannot be on a common line in $L_1$ because then $p$ and $q$ would be on a common line in $L_1$. Thus, the path uses at least $3$ vertices in level $k$, and so has length at least $4k+1$.

Let $\ell_1, \ell_2 \in L_1$. Any path would have to have both ends decrease to level $0$. If both $\ell_1$ and $\ell_2$ connect to points in some $a_i$, then since these vertices are a distance at least $4k+1$ apart, the path between $\ell_1$ and $\ell_2$ would have length at least $4k+1$. Otherwise, the paths from $\ell_1$ and $\ell_2$ arrive at points on different lines in $L_0$, say $p$ and $q$, respectively. These two points are on a common line not in $L_0$ or $L_1$, say $\ell$. The shortest path between $p$ and $q$ is to go from $p$ to $(p\ell)_{k}$, over to $(q\ell)_{k}$, and finally to $q$. However, this results in a path between $\ell_1$ and $\ell_2$ of length at least $4k+1$.

Let $(p\ell_1)_{k}, (q\ell_2)_{k}$ be two vertices in the same part other than $L_1$; that is, $p, q$ are both on some $a_i$ and $\ell_1, \ell_2$ are two lines in the same parallel line class. If a path joining them starts by decreasing levels from both ends to level $0$, that is connects $(p\ell_1)_{k}$ to $p$ and $(q\ell_2)_{k}$ to $q$, then since $p$ and $q$ are a distance at least $4k+1$ apart, the path between $(p\ell_1)_{k}$ and $(q\ell_2)_{k}$ would have length at least $4k+1$. Otherwise, at least one of $(p\ell_1)_{k}$ or $(q\ell_2)_{k}$ must first go to $(p'\ell_1)_{k}$ or $(q'\ell_2)_{k}$. Without loss of generality connect $(p\ell_1)_{k}$ to $(p'\ell_1)_{k}$. Now, any path must connect $(p'\ell_1)_{k}$ to $p'$ and $(q\ell_2)_{k}$ to $q$. These are on a common line not in $L_0$, however, increasing levels from each of $p'$ and $q$ to level $k$ results in a total of at least $4k+1$ steps.

Now consider two degree-two vertices in the same part.  Any path joining them has ends that either increase or decrease levels from the endpoint.  If the path increases levels from both ends or decreases levels from both ends, then we arrive at different vertices in the same level 0 or level $k$ part. Since the rest of the path must have length at least $4k+1$, the total path must have length at least $4k+1$. Otherwise, one end increases levels and the other decreases levels. The resulting point, $p$, is not on the resulting line, $\ell$. The path must next increase levels from $p$ to a line. If this line is in the same parallel line class as $\ell$, then the resultant path has length over $4k+1$. Otherwise, since this line is not in the same class as $\ell$, these two lines share a common point. The shortest completion of the path is through this point. However, since at least one of these lines is not in $L_1$, the path must contain at least $3$ vertices in level $k$. Thus, the path has length at least $4k+1$.
\end{proof}

\begin{lemma}
The subgraph of $G^{4k}$ induced by the vertices in levels $0$ through $k-1$ is complete multipartite with partition $\mathcal{V}$ restricted to those levels.
\end{lemma}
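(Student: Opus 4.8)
By the preceding lemma $G^{4k}$ is multipartite with partition $\mathcal{V}$, so it suffices to prove: whenever $u,v$ lie in \emph{distinct} parts of $\mathcal{V}$ and both have level at most $k-1$, one has $\dist(u,v)\le 4k$ in $G$. Write $u=(p\ell_1)_m$ and $v=(q\ell_2)_{m'}$ with $0\le m,m'\le k-1$. The plan is to build, in each case, a walk from $u$ to $v$ out of four moves: \emph{descend} a level-$i$ vertex to its point ($i$ steps); \emph{ascend} a level-$i$ vertex to level $k$ ($k-i$ steps), landing on a clique vertex, or on the line vertex itself when the line lies in $L_1$; one step inside a level-$k$ clique; and movement along a subdivision path. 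Two features of the construction will be used repeatedly: edges meeting $L_1$ were subdivided into paths of length $k$ rather than $k+1$, so routing through an $L_1$-line saves one step; and a level-$0$ vertex is a point, whose ``line'' may be taken in any of the classes $L_1,\dots,L_n$ through it.

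First the straightforward cases. If $p=q$, descend both endpoints to $p$: a walk of length $m+m'\le 2k-2$. If $p\ne q$ but the line through $p$ and $q$ is not in $L_0$, then $\dist(p,q)\le 2k+1$ (ascend $p$ to the level-$k$ clique of that line, cross one edge, descend to $q$; one step fewer if the line is in $L_1$), so $\dist(u,v)\le m+(2k+1)+m'\le 4k-1$. The remaining, decisive case is $p\ne q$ with the line through $p$ and $q$ lying in $L_0$, i.e.\ $p,q\in a_i$ for some $i$. Here the previous lemma already gives $\dist(p,q)\ge 4k+1$, so no walk may descend to both points; one has to travel through the incidence structure.

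Within that case, suppose first that an endpoint, say $u$, is a point, so $m=0$; then $v$ is not a point, for otherwise $v=q\in a_i$ would lie in the same part $a_i$ as $u=p$, so $m'\ge 1$. Choose a line $\ell_1$ through $p$, not in $L_0$, with $\ell_1\not\parallel\ell_2$ and with $\ell_1\in L_1$ whenever $\ell_2\notin L_1$ (possible since $n\ge 2$); then exactly one of $\ell_1,\ell_2$ lies in $L_1$. Let $x=\ell_1\cap\ell_2$ — a point off $a_i$, distinct from $p$ and $q$ — and route $u\to(p\ell_1)_k\to(x\ell_1)_k\to x\to(x\ell_2)_k\to(q\ell_2)_k\to v$, omitting the clique step at whichever endpoint's line is in $L_1$; the length is $4k+1-m-m'=4k+1-m'\le 4k$. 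If neither endpoint is a point then $m,m'\ge 1$; when $\ell_1\not\parallel\ell_2$ the same walk through $x=\ell_1\cap\ell_2$ has length at most $4k+2-m-m'\le 4k$. When $\ell_1\parallel\ell_2$, say both in $L_j$, then $u\in V_{i,j,m}$ and $v\in V_{i,j,m'}$, so being in distinct parts forces $m\ne m'$; here, for $j\ge 2$, descend the lower-level endpoint to its point, ascend the $L_1$-line through that point to its line vertex, descend to the point where that line meets the other endpoint's line, ascend along that line, cross its level-$k$ clique, and descend to the other endpoint; for $j=1$, ascend the higher-level endpoint to its line vertex, descend to another point of that line, ascend the (necessarily non-$L_1$) line joining that point to the point of the other endpoint, cross that clique, descend, and ascend to the other endpoint. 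A step count gives length $4k+1-|m-m'|\le 4k$ in both. This exhausts the cases; together with the previous lemma it shows the induced subgraph on levels $0,\dots,k-1$ is complete multipartite with the restricted partition.

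The main obstacle is precisely the last case ($p\ne q$, both on some $a_i$): each walk there consists, in effect, of four ascents/descents of length roughly $k$ plus the residual offsets $m,m'$, so the bound $4k$ is met only just. One must locate and correctly spend two bits of slack — saving one step by routing through an $L_1$-line, and using that distinct parthood forces either a free choice of line at a point-endpoint or the strict inequality $m\ne m'$ when $\ell_1\parallel\ell_2$. Checking that the required intersection points always exist and lie off $a_i$, and that no sub-configuration pushes the count above $4k$, is where the care is needed.
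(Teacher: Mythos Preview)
Your argument is correct. You organize the case analysis differently from the paper: you branch on the geometric relationship between the underlying points (same point; distinct points on distinct lines of $L_0$; distinct points on a common line of $L_0$), whereas the paper branches on the levels of the two vertices (both at level~$0$; both at the same level $i\ge1$; different levels $i<j$). In the hard case --- both points on a common $a_i$ --- you route through the intersection point $x=\ell_1\cap\ell_2$ when the lines are non-parallel, and you identify precisely the two units of slack (one from routing through an $L_1$-line, one from either the free choice of $\ell_1$ at a level-$0$ endpoint or the forced inequality $m\neq m'$ when $\ell_1\parallel\ell_2$). The paper instead always descends the lower-level endpoint and ascends the higher one, then splits on whether the resulting level-$k$ vertex lies in $L_1$; its slack comes from the strict inequality $i<j$ alone, bundled into the bound $i+(k-j)\le k-1$. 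Both approaches use the same incidence facts and both hit the bound $4k$ exactly in the extremal configurations; your organization makes the role of the $L_1$ shortcut and the tight cases more transparent, while the paper's level-based split avoids the separate treatment of the parallel sub-case.
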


\begin{proof}
Consider two points $p, q$ on different lines in $L_0$. They are on a common line $\ell\in \bigcup_{i=1}^n L_i$.  If $\ell\in L_1$, connect $p$ to $\ell$ then $\ell$ to $q$. If $\ell\notin L_1$, connect $p$ to $(p\ell)_k$ to $(q\ell)_k$ to $q$.  In each case the path has length at most $2k+1<4k$.

Consider two vertices in different parts at level $i$, $1\leq i\leq k-1$.  Either their points are on different lines in $L_0$ or their lines are from different parallel classes. If their points are from different lines in $L_0$, go to these points. These points share a common line not in $L_0$. Connect via the path between this line. This takes at most $2i+2k+1\leq4k-1$ steps. If their lines are from different parallel classes, increase levels to level $k$.  These two lines share a common point.  By, if necessary, first changing vertices at level $k$, connecting through this point, we get a path of length at most $2(k-i)+2+2k=4k-2i+2\leq4k$.

Finally, consider two vertices in levels $i$ and $j$, $0\leq i<j<k$.  Start a path joining them by decreasing levels from the lower-level vertex, and increasing levels from the larger-level vertex.  Let the point we arrive at from decreasing the lower-level vertex be $p$. If the increasing from the larger-level vertex takes us to a line in $L_1$, we can connect from this line to a point on a different line of $L_0$ than $p$, say $q$. Now $p$ and $q$ are on a common line not in $L_0$. Connecting through this gives us a path of length at most $k-1+k+2k+1=4k$.  If instead the increasing from the larger-level vertex takes us to a vertex of the form $(q\ell)_k$, $\ell\notin L_1$, then let $\ell'$ be the line through $p$ in $L_1$.  Now $\ell$ and $\ell'$ intersect at a point, say $q'$. We can complete the path by going from $(q\ell)_k$ to $(q'\ell)_k$ to $q'$ to $\ell'$ to $p$. This takes a total of at most $k-1+1+3k=4k$ steps.
\end{proof}

We will use the following result of Alon.
\begin{lemma}\cite{A}
\label{AlonLemma}
Let  $K_{r*s}$ denotes the complete $r$-partite graph with each part of size $s$.  There are two constants, $d_1$ and $d_2$, such that 
\[ d_1 r \log s \leq \chi_{\ell}(K_{r*s}) \leq d_2 r \log s. \]
\end{lemma}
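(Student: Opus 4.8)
The plan is to prove the two inequalities by separate probabilistic arguments, both exploiting the following reformulation of list-coloring $K_{r*s}$. Since vertices in a common part may repeat colors but vertices in distinct parts are adjacent, a proper list-coloring is exactly an assignment of pairwise disjoint color sets to the $r$ parts; equivalently, it is a map $\phi$ sending each available color to one of the parts in $[r]$ such that every vertex $v$ in part $A_i$ has some color of $L(v)$ sent to $i$. The upper bound comes from exhibiting one good $\phi$ at random, and the lower bound from showing that random lists admit no good $\phi$.

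For the upper bound I would take $k=\ceil{d_2\, r\log s}$, assign to each color appearing in a list an independent uniform label in $[r]$, and color each vertex $v\in A_i$, when possible, by a color of $L(v)$ labeled $i$. Colors used in distinct parts then automatically differ, so the coloring is proper. A fixed $v$ fails with probability $(1-1/r)^{k}\le e^{-k/r}$, so a union bound over the $rs$ vertices gives success with positive probability once $rs\,e^{-k/r}<1$, i.e. once $k>r\log(rs)$. Choosing $d_2$ suitably yields the claimed bound.

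For the lower bound I would fix the parts and randomize the lists: take a ground set $C$ of $N$ colors and give each vertex an independent uniform $k$-subset of $C$, with $k=\ceil{d_1\, r\log s}$ and $N$ a constant multiple of $k$. A colorable assignment is exactly one admitting a map $\phi:C\to[r]$ for which every vertex of $A_i$ meets $\phi^{-1}(i)$. For a fixed $\phi$ with preimage sizes $t_1,\dots,t_r$ summing to $N$, a vertex of $A_i$ misses $\phi^{-1}(i)$ with probability $\binom{N-t_i}{k}/\binom{N}{k}$, so $\phi$ is good with probability $\prod_i\bigl(1-\binom{N-t_i}{k}/\binom{N}{k}\bigr)^{s}$, which by convexity is maximized at the balanced sizes $t_i=N/r$. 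I would then bound the expected number of good maps by $r^{N}$ times this balanced value and choose $N$ so that the expectation drops below $1$; some list assignment is then not colorable, forcing $\chi_{\ell}(K_{r*s})>k$.

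The main obstacle is the lower bound, where the union bound ranges over the $r^{N}$ maps $\phi$, an enormous count, so the per-$\phi$ probability must be driven down to compensate. Concretely, with balanced $t_i=N/r$ one has $\binom{N-t_i}{k}/\binom{N}{k}\approx e^{-k/r}$, and the condition for the expectation to be small becomes $N\log r \lesssim rs\,e^{-k/r}$; solving for $k$ and verifying that the resulting bound is $\Theta(r\log s)$ rather than $\Theta(r\log(rs))$ requires a careful choice of $N$ and a tight estimate of the balanced binomial ratio. I would also remark that the two bounds match cleanly as $\Theta(r\log s)$ in the regime $r=\mathrm{poly}(s)$ relevant to the complete multipartite graphs arising in Section~\ref{sec:construction}, where $\log(rs)=\Theta(\log s)$ already makes the upper-bound estimate above of the desired order.
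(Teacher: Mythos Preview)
The paper does not prove this lemma at all: it is quoted from Alon \cite{A} and used as a black box, so there is no ``paper's own proof'' to compare against. Your sketch is, in outline, the standard probabilistic argument that Alon uses, and would be a reasonable way to fill in the citation if one wished to.

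One genuine gap worth flagging in your upper-bound argument: the union bound over $rs$ vertices yields $k>r\log(rs)=r\log r+r\log s$, which is $O(r\log s)$ only when $r$ is polynomial in $s$. You note this at the end, and it is indeed harmless for the application in Section~\ref{sec:construction}, where $r\approx kn^2$ and $s=n$; but the lemma as stated claims absolute constants valid for all $r,s$, and obtaining the clean $O(r\log s)$ upper bound in full generality requires a bit more than the one-shot union bound you describe. Your lower-bound outline is the right idea, though as you say the calibration of $N$ versus $k$ is where the real work lies.
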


Everything is now in place to complete the proof.
\begin{theorem}
\label{mainthm}
There is a positive constant $c$ such that for every $k\in\mathbb{N}$, there is an infinite family of graphs $G$ with $\chi(G^k)$ unbounded such that 
\[ \chi_{\ell}(G^k) \geq c \chi(G^k) \log \chi(G^k) . \]
\end{theorem}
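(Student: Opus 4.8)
The plan is to use the graph $G$ constructed in Section~\ref{sec:construction} and apply it to the $k$th power rather than the $4k$th power. The first two lemmas tell us that $G^{4k}$ is multipartite with partition $\mathcal{V}$, and that $G^{4k}$ restricted to levels $0$ through $k-1$ is \emph{complete} multipartite with partition $\mathcal{V}$ restricted to those levels. The subtlety is that the theorem concerns $G^k$, not $G^{4k}$; however, replacing $k$ by $4k$ throughout the construction (i.e.\ applying the construction with parameter $4k$ in place of $k$, which is legitimate since $4k\geq 2$) produces a graph whose $4k$th power is what we want, and relabeling $4k$ as the exponent appearing in the theorem statement yields a family for every multiple of $4$; but in fact we want this for \emph{every} $k\in\mathbb{N}$, so the cleaner route is: given the target exponent $k$ from the theorem, note $k\ge 1$, and if $k=1$ invoke the classical fact that $\chi_\ell(K_{r*s})$ already beats $\chi(K_{r*s})$, while for $k\ge 2$ run the construction with parameter $k$ as written and then observe the construction and lemmas were really stated so that $G^{4k'}$ has the desired structure when built with parameter $k'$; we pick $k'$ so that $4k' $ is the exponent we care about. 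I would state this bookkeeping carefully but it is routine.

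Concretely, here is the chain of inequalities I would assemble. Build $G$ with the construction parameter chosen so that the exponent in question equals four times it; call that exponent $k$ for the remainder (so implicitly $4\mid k$, and for the residual cases $k\not\equiv 0\pmod 4$ one either absorbs the loss into the constant $c$ or handles small cases directly — I will address this below as the main obstacle). By the first lemma, $\chi(G^{k})\le n$, since $\mathcal V$ has $n(n+1)k$ parts... wait, more carefully: $\mathcal{V}$ partitions $V(G)$ into $n^2(k+1)$ sets each of size $n$, so $G^{k}$ being multipartite with this partition gives $\chi(G^{k})\le n$. On the other hand, by the second lemma the subgraph induced on levels $0$ through $k-1$ is the complete multipartite graph $K_{r*n}$ with $r = n^2 (k)$ parts (one $V_{i,j,m}$ for each $i,j\in[n]$ and $m\in\{0,\dots,k-1\}$, hence $r=kn^2$ parts, though for a lower bound on $\chi_\ell$ even $r=n^2$ parts from a single level suffices). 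Since $\chi_\ell$ is monotone under taking subgraphs, $\chi_\ell(G^{k})\ge \chi_\ell(K_{r*n})\ge d_1 r\log n\ge d_1 n^2\log n$ by Lemma~\ref{AlonLemma}. Combining, and using $\chi(G^k)=n$ exactly (equality holds because $G^k$ contains $K_n$, e.g.\ any single line in $L_1$ after its subdivision, or just any part can be checked to have a transversal clique — in any case one argues $\chi(G^k)\ge n$), we get
\[
\chi_\ell(G^{k}) \;\ge\; d_1 n^2 \log n \;=\; d_1\,\chi(G^k)^2\log\chi(G^k)\;\ge\; d_1\,\chi(G^k)\log\chi(G^k),
\]
which is even stronger than required, with $c=d_1$. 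Letting $n$ range over prime powers makes $\chi(G^{k})=n$ unbounded, giving the infinite family.

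The main obstacle I anticipate is not the inequality chain but the indexing: the lemmas are phrased in terms of $G^{4k}$ where $G$ is built from the parameter $k$, yet the theorem wants a statement for every $k\in\mathbb N$ with exponent exactly $k$. So I must show that for an arbitrary target exponent $K\in\mathbb N$ there is a graph whose $K$th power has a large complete-multipartite subgraph while having chromatic number $n$. For $K$ a multiple of $4$, write $K=4k$ and apply the construction with parameter $k$ directly. For other residues, the honest fix is to observe that the construction and both lemmas go through \emph{verbatim} with $k$ replaced by any real-subdivision-length scheme, or more simply: for target exponent $K$ with $K\ge 2$, subdivide edges incident to $L_1$ into paths of length $\lceil K/4\rceil$ and the rest into paths of length $\lceil K/4\rceil+1$; rerunning the two lemmas' arguments shows $G^{K}$ is multipartite with partition $\mathcal V$ (the "at least $4\lceil K/4\rceil+1 > K$" distance bounds still hold) and complete multipartite on a constant fraction of the levels, so the same conclusion follows with the constant $c$ possibly reduced. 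For $K=1$ one simply takes $G=K_{n*s}$ with $s$ growing, which has $\chi=n$ and $\chi_\ell\ge d_1 n\log s$; choosing $s=n$ gives the bound with room to spare. Assembling these cases into a single clean statement, and checking that the lower bound $\chi(G^K)\ge n$ genuinely holds (not just $\le n$) in each case, is the part that needs care; everything else is a direct citation of the preceding lemmas and Lemma~\ref{AlonLemma}.
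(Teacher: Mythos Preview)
There is a genuine error in your chain of inequalities. You write ``$G^{k}$ being multipartite with this partition gives $\chi(G^{k})\le n$,'' but a multipartite graph has chromatic number at most the \emph{number of parts}, not the \emph{size of each part}. Here $\mathcal{V}$ has (after removing repetitions) $kn^2+1$ distinct parts, each of size $n$, so the first lemma yields only $\chi(G^{4k})\le kn^2+1$, i.e.\ $n\ge\sqrt{(\chi(G^{4k})-1)/k}$. Combined with Alon's lower bound $\chi_\ell(G^{4k})\ge d_1(k-1)n^2\log n$ from the complete-multipartite subgraph on the lower levels, this gives $\chi_\ell(G^{4k})\ge c\,\chi(G^{4k})\log\chi(G^{4k})$ after routine manipulation---exactly the claimed bound, \emph{not} the quadratic $\chi^2\log\chi$ you derived. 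Your stronger conclusion should have been a red flag: it would dramatically improve the state of the art and does not follow from this construction.

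Your indexing worry is also overcomplicated. Powers of graphs compose: since $d_{G^a}(u,v)=\lceil d_G(u,v)/a\rceil$, we have $(G^a)^b=G^{ab}$ for all $a,b\ge 1$. So there is no need to modify subdivision lengths or handle residues modulo $4$ separately. Having established the inequalities for $G^{4k}$, one simply observes that the family $\{G^4: n\text{ a prime power}\}$ consists of graphs whose $k$th powers are exactly the $G^{4k}$, and the theorem follows for every $k$ with a single uniform construction.
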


\begin{proof}
Since $G^{4k}$ is multipartite on $k n^2+1$ parts, $\chi(G^{4k}) \leq k n^2+1$, and so $n \geq \sqrt{(\chi(G^{4k})-1)/k}$.

Since $G^{4k}$ contains a complete multipartite subgraph with $(k-1) n^2$ parts of size $n$, we have from Lemma \ref{AlonLemma} that
\begin{align*}
\chi_{\ell}(G^{4k}) &\geq d_1 (k-1) n^2 \log n \\
  &\geq d_1 \frac{k-1}{k} \left(\chi(G^{4k})-1\right) \log \sqrt{\frac{\chi(G^{4k})-1}{k}} \\
  &= \frac{d_1}{2} \frac{k-1}{k} \left(\chi(G^{4k})-1\right) \left( \log (\chi(G^{4k})-1) - \log k \right) \\
  &\geq \frac{d_1}{4} \left(\chi(G^{4k})-1\right) \left( \log (\chi(G^{4k})-1) - \log k \right).
\end{align*}
Taking $n$ large enough makes $\chi(G^{4k})$ as large as we like, and so by taking a constant $c$ just smaller than $d_1/4$ and taking $n$ sufficiently large, we obtain
\[ \chi_{\ell}(G^{4k}) \geq c \chi(G^{4k}) \log \chi(G^{4k}). \]
The family $\{G^4\}$ is an infinite family of graphs whose $k$th powers have the desired properties.
\end{proof}

\section{Upper bound}\label{sec:upperbd}
We now provide an upper bound on $\chi_{\ell}(G^k)$ in terms of $\chi(G^k)$.  Recall that $f_k(m)=\max\{\chi_{\ell}(G^k): \chi(G^k)=m\}$.
\begin{theorem}\label{thm:upperbd}
Let $k>1$.  If $k$ is even, then $f_k(m)<m^2$.  If $k$ is odd, then $f_k(m)<m^3$.
\end{theorem}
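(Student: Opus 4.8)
The plan is to bound $\chi_{\ell}(G^k)$ by an appropriate clique number in $G^k$ and then show that this clique number cannot be too large relative to $\chi(G^k)$. The starting point is the trivial bound $\chi_{\ell}(H) \le |V(H)|$ for any graph $H$, but we want something in terms of the chromatic number; the right intermediate quantity is the \emph{degeneracy} or, more simply, a bound coming from local structure. The key observation for powers is that a ball of radius $\lfloor k/2 \rfloor$ around any vertex is a clique in $G^k$, so if we can show $G^k$ is $(\omega(G^k)^2-1)$-degenerate (even $k$) or $(\omega(G^k)^3-1)$-degenerate (odd $k$), then since $\chi_{\ell}(H) \le \mathrm{col}(H) \le d+1$ for a $d$-degenerate graph $H$, and since $\omega(G^k) \le \chi(G^k)$, we would be done. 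So the real content is a bound of the form: every subgraph of $G^k$ has a vertex of degree less than $\omega(G^k)^2$ (resp. $\omega(G^k)^3$).

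First I would set $\omega = \omega(G^k)$ and, to exploit subgraph-closure, argue directly that $G^k$ has a vertex whose closed neighborhood has size at most $\omega^2$ (even $k$) or $\omega^3$ (odd $k$); applying the same argument to every induced subgraph of $G^k$ (which is itself a power? no — careful here) gives degeneracy. Since induced subgraphs of $G^k$ need not be powers of graphs, I would instead run the counting argument on $G$ itself: pick a vertex $v$ of $G$, and count the vertices within distance $k$ of $v$ in $G$. For \textbf{even} $k$, write $k = 2t$. Every vertex $u$ with $\dist_G(v,u) \le k$ lies within distance $t$ of some vertex $w$ with $\dist_G(v,w) \le t$; the set $A$ of all such $w$ is a clique in $G^k$ (any two are within distance $2t = k$), so $|A| \le \omega$. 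Likewise, for each fixed $w \in A$, the ball $B_t(w)$ is a clique in $G^k$, so $|B_t(w)| \le \omega$. Hence the number of vertices within distance $k$ of $v$ is at most $\sum_{w \in A} |B_t(w)| \le \omega \cdot \omega = \omega^2$, and in particular $v$ has at most $\omega^2 - 1$ neighbors in $G^k$. For \textbf{odd} $k$, write $k = 2t+1$; now a vertex within distance $k$ of $v$ lies within distance $t$ of some $w$ with $\dist_G(v,w) \le t+1$, but the set of such $w$ need only have pairwise distance at most $2t+2 = k+1$, which is \emph{not} a clique in $G^k$. To fix this I would go one more level down: such $w$ is within distance $t$ of some $x$ with $\dist_G(v,x)\le 1$... this still does not immediately give a clique. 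The cleaner route for odd $k$ is: the ball of radius $k$ around $v$ is contained in the ball of radius $t+1$ around the ball of radius $t$ around $v$; letting $A = B_t(v)$ (a clique in $G^k$ since $2t \le k$, so $|A|\le\omega$) and for each $w\in A$ letting $C_w = B_{t+1}(w)$, we have $B_k(v) \subseteq \bigcup_{w\in A} C_w$. Each $C_w$ need not be a clique, but $C_w \subseteq B_{t}(B_1(w)) $ and, choosing for each $w$ a neighbor structure, $|C_w| \le |A'_w| \cdot \omega$ where $A'_w = B_1(w)$... and $|B_1(w)| \le \omega$ since any two vertices of $B_1(w)$ are within distance $2 \le k$. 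This gives $|B_k(v)| \le \omega \cdot \omega \cdot \omega = \omega^3$, so $v$ has at most $\omega^3 - 1$ neighbors in $G^k$.

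Then I would finish by the degeneracy argument: the bound above applies to $v$ in any graph $G'$ with $(G')^k$ an induced subgraph of $G^k$ — but since I ran the count inside $G$, I instead note that for every nonempty $S \subseteq V(G)$, the induced subgraph $G^k[S]$ has a vertex $v \in S$ whose degree \emph{within $S$} is at most (number of $G$-distance-$\le k$ vertices from $v$) $- 1 \le \omega^2 - 1$ (even) or $\omega^3 - 1$ (odd), where now $\omega$ may be taken as $\omega(G^k) \le \chi(G^k) = m$; monotonicity of $\omega$ under taking subgraphs keeps the bound valid. Hence $G^k$ is $(m^2-1)$-degenerate (even) or $(m^3-1)$-degenerate (odd), so $\chi_{\ell}(G^k) \le m^2$ (even) or $\le m^3$ (odd); the strict inequality $f_k(m) < m^2$ (resp. $< m^3$) follows because equality in $\chi_{\ell} \le \mathrm{col}$ combined with $\omega \le \chi$ forces extremal configurations that cannot all be simultaneously tight (or more simply, because we can shave a constant: the vertex $v$ maximizing distance from the center of its clique contributes a repeated count, so the ball is strictly smaller than $\omega^2$). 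The main obstacle I anticipate is exactly the odd case: making the three-layer decomposition genuinely cover $B_k(v)$ while keeping each layer a clique requires choosing the layers in the right order (farthest-first), and the "strictness" of the inequality needs a clean justification — I expect the honest fix is to observe that the center vertex $v$ itself is counted in every layer, so the union is over-counted by at least $|A|-1 \ge 0$... more carefully, $v \in B_t(v) \cap B_{t+1}(v)$, giving a saving that yields the strict bound once $m \ge 1$.
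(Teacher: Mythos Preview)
Your approach is essentially the paper's: bound $\chi_{\ell}(G^k)\le 1+\Delta(G^k)$, then bound $|B_k(v)|$ by covering the radius-$k$ ball with radius-$\lfloor k/2\rfloor$ balls, each of which is a clique in $G^k$ of size at most $\omega=\omega(G^k)\le m$. Your even-$k$ argument is exactly Kwon's observation (to which the paper simply refers), and your odd-$k$ three-layer decomposition $B_t\circ B_1\circ B_t$ is a mild rearrangement of the paper's covering $B(x,k)\subseteq B(x,t)\cup\bigcup_{y\in A}B(y,t)$ with $A$ the sphere at radius $t{+}1$.

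The one genuine soft spot is the \emph{strict} inequality. Your overcounting sketch (``$v$ is counted in every layer'') can be made rigorous, but as written it only yields a saving of $|A|-1\ge 0$, which is not strict unless you also argue $|A|\ge 2$; this requires noting that for $m\ge 2$ the max-degree vertex $x$ of $G^k$ is non-isolated in $G$, hence $|B_t(x)|\ge 2$. The paper avoids this bookkeeping by keeping the middle layer as $\Delta(G)$ rather than collapsing it to $\omega$: it proves $\chi_{\ell}(G^k)\le \Delta(G)\,\omega(G^k)^2$, and then the clean observation $\Delta(G)+1\le |B_1(v)|\le\omega(G^k)$ (valid for $k\ge 2$) gives $\Delta(G)<\chi(G^k)$ and hence $f_k(m)<m^3$ immediately. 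Incidentally, your detour through degeneracy of arbitrary induced subgraphs of $G^k$ is unnecessary: the greedy bound $\chi_{\ell}\le 1+\Delta$ already suffices, since you are bounding $\Delta(G^k)$ itself.
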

When $k$ is even, this follows from Kwon's observation (see \cite{N}) that it holds for $k=2$.  When $k$ is odd, we generalize the argument and prove the following.
\begin{theorem}\label{deltachiUB}
Let $k\geq3$, $k$ odd.  Then for any $G$, $\chi_{\ell}(G^k) \leq \Delta(G)\, \chi(G^k)^2$.
\end{theorem}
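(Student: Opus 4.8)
The plan is to prove the slightly stronger, purely local statement that $\Delta(G^k)$ itself is small, namely $\Delta(G^k)\le \Delta(G)\,\chi(G^k)^2-\Delta(G)\,\chi(G^k)$, and then finish with the trivial greedy bound $\chi_\ell(H)\le \Delta(H)+1$. Write $k=2t+1$ with $t\ge 1$, put $m=\chi(G^k)$, and let $B_r(v)$ denote the ball of radius $r$ around $v$ in $G$. The whole argument rests on covering the edges of $G^k$ by small cliques.

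\emph{Step 1: a clique cover of $G^k$.} For each edge $xy\in E(G)$ set $C_{xy}=B_t(x)\cup B_t(y)$. Any two vertices of $C_{xy}$ are at $G$-distance at most $t+1+t=k$, so $C_{xy}$ induces a clique in $G^k$; in particular $|C_{xy}|\le \omega(G^k)\le \chi(G^k)=m$. Conversely, if $uv\in E(G^k)$ then $1\le \dist_G(u,v)\le 2t+1$, and on a shortest $u$–$v$ path one can choose an edge $xy$ positioned so that $\dist_G(u,x)\le t$ and $\dist_G(v,y)\le t$ (this is possible exactly because the path has length at most $2t+1$); then $u,v\in C_{xy}$. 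Hence every edge of $G^k$ lies inside one of the cliques $C_{xy}$.

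\emph{Step 2: few cliques through each vertex, and the conclusion.} Fix $v\in V(G)$. Note $v\in C_{xy}$ iff $x\in B_t(v)$ or $y\in B_t(v)$, so the number of edges $xy$ with $v\in C_{xy}$ is at most $\sum_{w\in B_t(v)}\deg_G(w)\le |B_t(v)|\cdot\Delta(G)$. Since $B_t(v)$ is a clique in $G^{2t}\subseteq G^k$, we have $|B_t(v)|\le m$, so $v$ lies in at most $m\,\Delta(G)$ of the cliques $C_{xy}$. By Step 1 the neighbourhood $N_{G^k}(v)$ is contained in the union of those cliques; each of them contains $v$ and has at most $m$ vertices, so $\deg_{G^k}(v)\le m\,\Delta(G)\,(m-1)$. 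As this holds for all $v$, $\chi_\ell(G^k)\le \Delta(G^k)+1\le \Delta(G)\,m^2-\Delta(G)\,m+1\le \Delta(G)\,m^2$, which is Theorem~\ref{deltachiUB}. (This also yields the odd case of Theorem~\ref{thm:upperbd}: taking $w$ of maximum degree, $\{w\}\cup N_G(w)$ is a clique in $G^2\subseteq G^k$, so $\Delta(G)\le \chi(G^k)-1$ and therefore $\Delta(G)\,\chi(G^k)^2<\chi(G^k)^3$.)

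The only place I expect to need an idea is Step 1 — recognizing that $G^k$ is edge-covered by the cliques $B_t(x)\cup B_t(y)$ and that, because each such set is a clique in $G^k$, it has at most $\omega(G^k)\le m$ vertices; the rest is bookkeeping, and getting the constant exactly $\Delta(G)\,m^2$ (rather than $2\Delta(G)\,m^2$) is precisely what this cliqueness buys us. It is worth remarking that the same argument with the balls $B_t(w)$ in place of the sets $C_{xy}$ handles even $k=2t$ and recovers Kwon's bound $f_k(m)<m^2$ with no factor $\Delta(G)$, since then a vertex lies in only $|B_t(v)|\le m$ of the relevant cliques.
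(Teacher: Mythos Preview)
Your proof is correct and follows essentially the same approach as the paper: both bound $\Delta(G^k)$ by covering the $k$-ball around a vertex with about $\Delta(G)\cdot\omega(G^k)$ many radius-$t$ balls (each a clique of size at most $\omega(G^k)$ in $G^k$), and then finish with the greedy bound $\chi_\ell\le\Delta+1$. Your clique edge-cover formulation via the sets $C_{xy}=B_t(x)\cup B_t(y)$ is a clean repackaging of this; the paper instead covers $B(x,k)\setminus B(x,\lfloor k/2\rfloor)$ directly by the balls $B(y,\lfloor k/2\rfloor)$ for $y$ ranging over the distance-$(t{+}1)$ sphere $A$ around $x$, and bounds $|A|\le(\Delta(G)-1)\,\omega(G^k)$ via the distance-$t$ sphere.
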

Theorem \ref{thm:upperbd} follows by noting that $\Delta(G)<\omega(G^k)\leq\chi(G^k)$ when $k>1$.
\begin{proof}[Proof of Theorem \ref{deltachiUB}]
Let $x$ be a vertex with maximum degree in $G^k$.  Let $A$ be the set of vertices at distance $\ceil{k/2}$ from $x$ in $G$.  Let $B(v,r)$ denote the ball of radius $r$ centered at $v$ in $G$.  Note that $\Delta(G^k) = \max\{|B(v,k)|-1: v\in V(G)\}$ and $\omega(G^k)\geq \max\{|B(v, \floor{k/2})|: v\in V(G)\}$.

Since $k$ is odd (and bigger than 1), we have
\begin{equation} \label{containment} 
B(x,k)\setminus B(x,\floor{k/2}) \subseteq\bigcup_{y\in A} B(y,\floor{k/2}).
\end{equation}
Let $S$ be the set of vertices at distance $\floor{k/2}$ from $x$ in $G$.  Then $S$ is a clique in $G^k$, so $|S|\leq \omega(G^k)$.  Also, $A$ is contained in the neighborhood of $S$, and each vertex in $S$ also has at least one neighbor outside of $A$ (closer to $x$).  Hence $|A|\leq (\Delta(G)-1)|S|\leq (\Delta(G)-1)\omega(G^k)$.  So
\begin{align*}
 \chi_{\ell}(G^k) &\leq 1+\Delta(G^k) & \text{(degeneracy)} \\
 &= |B(x,k)| \\
 &\leq |B(x,\floor{k/2})| + \sum_{y\in A}|B(y,\floor{k/2})| & \text{(equation (\ref{containment}))}\\
 &\leq (1+|A|) \max_{v\in V(G)} |B(v,\floor{k/2})| &\text{(bounding terms in sum)}\\
 &\leq \left(1+(\Delta(G)-1)\omega(G^k)\right) \omega(G^k) \\
 &\leq \Delta(G) \, \omega(G^k)^2 \\
 &\leq \Delta(G) \, \chi(G^k)^2. & \qedhere
\end{align*}
\end{proof}

\section{Remarks}
Using constructions similar to that of section \ref{sec:construction}, we have found infinite families of graphs $G$ whose $k$th powers are complete multipartite on roughly $kn^2/4$ parts each of size $n$, but only when $k\not\equiv0\mod4$.  The construction presented here is messier and does not yield complete multipartite powers, but it proves the theorem for all values of $k$ simultaneously.

The authors would like to thank Douglas West for bringing this problem to our attention and helping to improve the exposition of this note, Alexandr Kostochka and Jonathan Noel for their encouragement and helpful comments, and the anonymous referees for their helpful advice.

\bibliographystyle{plain}
\bibliography{powerlistchr}

\end{document}